\theoremstyle{plain}
\newtheorem{theorem}{Theorem}[section]
\newtheorem{corollary}[theorem]{Corollary}
\newtheorem{lemma}[theorem]{Lemma}
\theoremstyle{definition}
\newtheorem{example}[theorem]{Example}
\DeclareMathOperator{\spn}{span}
\title{Nilpotent Lie and Leibniz algebras}
\author[Batten Ray]{Chelsie Batten Ray}
\thanks{The work of the first three authors was supported by NSF grant DMS-0943855.}
\address{Department of Mathematics, North Carolina State University\\
Raleigh, NC 27695}
\email{cabatten@ncsu.edu}
\author[Combs]{Alexander Combs}
\address{Department of Mathematics, North Carolina State University\\
Raleigh, NC 27695}
\email{ancombs2@ncsu.edu}
\author[Gin]{Nicole Gin}
\address{Department of Mathematics, North Carolina State University\\
Raleigh, NC 27695}
\email{nagin@ncsu.edu}
\author[Hedges]{Allison Hedges}
\address{Department of Mathematics, North Carolina State University\\
Raleigh, NC 27695}
\email{armcalis@ncsu.edu}
\author[Hird]{J.T. Hird}
\address{Department of Mathematics, West Virginia University, Institute of Technology\\
Montgomery, WV 25136}
\email{jthird@ncsu.edu}
\author[Zack]{Laurie Zack}
\address{Department of Mathematics and Computer Science, High Point University\\
High Point, NC 27262}
\email{lzack@highpoint.edu}
\begin{document}
\maketitle

\begin{abstract}
We extend results on finite dimensional nilpotent Lie algebras to Leibniz algebras and counterexamples to others are found.  One generator algebras are used in these examples and are investigated further.
\end{abstract}

\section{Introduction}\label{intro}


Results on nilpotent Lie algebras have been extended to Leibniz algebras by various authors.  Among them are Engel's Theorem (\cite{ayupov}, \cite{barnesengel}, \cite{jacobsonleib}, \cite{patso}), the nilpotency of algebras which admit a prime period automorphism without non-zero fixed points \cite{jacobsonleib}, and the equivalence of 
(a) nilpotency, (b) the normalizer condition, (c) the right normalizer condition, (d) that all maximal subalgebras are ideals, and (e) that all maximal subalgebras are right ideals (which follows from material in \cite{barnesleib}).
We will consider extending other results of this type.  In some cases, we have found such extensions.  In other cases, the results do not extend, one generator algebras providing examples.  Investigations of these one generator algebras occur in \cite{barneslattice} and \cite{allison}.  We add further results in this direction.  We also consider Leibniz algebras whose center or right center are one dimensional and extend non-embedding results from Lie theory (\cite{chao}, \cite{stitz1}).  There are results in \cite{ao} for when the left center is one dimensional. (In \cite{patso} the work is done for Leibniz algebras for which right multiplication is a derivation as opposed to, following Barnes, our algebra for which left multiplication is a derivation.  Thus in \cite{patso} our left center would be right center, etc.)  In this work we consider only finite dimensional Leibniz algebras.

\section{Preliminaries}

Let $\Phi(A)$ be the Frattini subalgebra of the Leibniz algebra $A$, which is the intersection of all maximal subalgebras of $A$.  As in Lie theory, $\Phi(A)$ is an ideal when the algebra is of characteristic 0 \cite{REG2011}, but not generally, even if the algebra is solvable \cite{REG2011}, which is counter to the case for solvable Lie algebras.

We will consider left Leibniz algebras, following Barnes \cite{barnesleib}. Hence a Leibniz algebra is an algebra that satisfies the identity $x(yz)=(xy)z +y(xz)$. We consider only finite dimensional algebras over a field. Let $A$ be a Leibniz algebra. The center of $A$ will be denoted by $Z(A)$ and $R(A)$ will be the right center,  $\{a \in A : Aa=0\}$.  We let $A^2=AA$ and define the lower central series by $A^{j+1}=AA^j$. It is known that $A^j$ is the space of all linear combination of products of $j$ elements no matter how associated. Thus it is often sufficient to consider only left-normed products of elements.  $A$ is nilpotent of class $t$ if $A^{t+1}=0$ but $A^t \neq 0$. When $A$ has class $t$, then $A^t \subset Z(A)$.

\section{Extending Lie Nilpotency Properties}

The first three results are direct generalizations of Lie algebra results; the proofs are similar to the Lie case, thus we omit them. The next two examples demonstrate results of Lie algebras that do not extend to Leibniz algebras. The remainder of the section is devoted to modifying the definition used in the last example in order that we can obtain an extension to Leibniz algebras, yet when restricted to Lie algebras, it is the original one. 

The first two results extend Lie algebra results from \cite{chaoS*}.

\begin{theorem}
Let $A$ be a Leibniz algebra and $N$ be a nilpotent ideal of $A$.  Then $A$ is nilpotent if and only if $A/N^2$ is nilpotent. 
\end{theorem}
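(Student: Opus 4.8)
The plan is to prove the nontrivial direction ($A/N^2$ nilpotent $\Rightarrow$ $A$ nilpotent); the converse is immediate, since any quotient of a nilpotent algebra is nilpotent. I would reduce to Engel's theorem for Leibniz algebras (available by the references cited in the introduction): it suffices to show that the left multiplication $L_a\colon x\mapsto ax$ is a nilpotent operator on $A$ for every $a\in A$. The two features I will lean on are that $L_a$ is a derivation of $A$ — this is precisely the defining identity $x(yz)=(xy)z+y(xz)$ — and that each $N^i$ from the lower central series of $N$ is an ideal of $A$ with $N^iN^j\subseteq N^{i+j}$.

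First I would record the structural facts peculiar to the Leibniz setting. Using the Leibniz identity one gets $AN^i\subseteq N^i$, and using its rearrangement $(xy)z=x(yz)-y(xz)$ one gets $N^iA\subseteq N^i$, so each $N^i$ is an ideal of $A$ (in particular $N^2$ is, so that $A/N^2$ is a genuine Leibniz algebra). The inclusion $N^iN^j\subseteq N^{i+j}$ follows from the description of $A^k$ as the span of all products of $k$ elements regardless of association. Let $N^{k+1}=0$ and let $t$ be the nilpotency class of $A/N^2$, so that $A^{t+1}\subseteq N^2$; writing $p=t$ we then have both $L_a^{p}(A)\subseteq A^{t+1}\subseteq N^2$ and, by restriction, $L_a^{p}(N)\subseteq N^2$ for every $a$.

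The heart of the argument is to propagate $L_a^{p}(N)\subseteq N^2$ up the filtration of $N$ via the derivation property. For a left-normed product $y_1\cdots y_i\in N^i$ with $y_\ell\in N$, the generalized Leibniz rule expresses $L_a^{n}(y_1\cdots y_i)$ as a sum of terms $(L_a^{n_1}y_1)\cdots(L_a^{n_i}y_i)$ with $n_1+\cdots+n_i=n$. Once $n\geq i(p-1)+1$, a pigeonhole argument forces some $n_\ell\geq p$ in every term, so that factor lies in $N^2$ while the others remain in $N$; since $N^2$ is $L_a$-invariant and $N^2N^{i-1}\subseteq N^{i+1}$, the whole term lies in $N^{i+1}$. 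Hence $L_a$ is nilpotent on each quotient $N^i/N^{i+1}$, and because it preserves the finite filtration $N\supseteq N^2\supseteq\cdots\supseteq N^{k+1}=0$ it is nilpotent on $N$, say $L_a^{d}(N)=0$. Combining this with $L_a^{p}(A)\subseteq N$ yields $L_a^{p+d}(A)=0$, so $L_a$ is nilpotent; Engel's theorem then gives that $A$ is nilpotent.

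I expect the main obstacle to be the Leibniz-specific bookkeeping rather than any deep idea: one must verify that the $N^i$ are genuinely ideals and that $N^iN^j\subseteq N^{i+j}$, facts automatic for Lie algebras but here requiring the Leibniz identity together with its rearrangement, and one must use left multiplications throughout, since right multiplication need not be a derivation. The pigeonhole step applied to the generalized product rule is where the nilpotency of $N$ and the hypothesis on $A/N^2$ are combined, and keeping the association left-normed while expanding $L_a^n$ on a product is the point needing the most care.
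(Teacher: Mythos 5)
Your proof is correct, but it is not the argument the paper intends. The paper in fact omits the proof of this theorem, saying only that it is ``similar to the Lie case,'' namely Chao's original argument (the reference \cite{chaoS*}), which is a direct lower-central-series computation: one shows each term $N^i$ of the lower central series of $N$ is an ideal of $A$, and then, starting from $A^{d+2}\subseteq N^2$, pushes long left-normed products of $A$ down the filtration $N\supseteq N^2\supseteq\cdots$, obtaining $A^m\subseteq N^i$ for explicitly controlled $m$ growing with $i$. Your route is genuinely different: you reduce to Engel's theorem for Leibniz algebras by showing every left multiplication $L_a$ is nilpotent, via the derivation power rule and a pigeonhole estimate giving $L_a^n(N^i)\subseteq N^{i+1}$ for $n\geq i(p-1)+1$. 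Your argument is sound, and it correctly isolates the two Leibniz-specific verifications (that the $N^i$ are ideals of $A$ and that $N^iN^j\subseteq N^{i+j}$, both consequences of the ``products of $k$ elements no matter how associated'' fact quoted in the paper); it also cleanly exploits a theorem that has already been extended to Leibniz algebras, as the paper's introduction notes. What the classical filtration approach buys, and yours does not, is quantitative control: the very next statement in the paper is a corollary asserting the class bound ${c+1 \choose 2}d - {c \choose 2}$, which falls out of the weight bookkeeping in Chao's computation but not from the Engel reduction, since Engel's theorem converts ``all $L_a$ nilpotent'' into nilpotency with no useful bound on the class. (Your appeal to Engel also requires finite dimensionality, but the paper assumes that throughout, so this is not a defect.) One small bookkeeping point: when the pigeonholed factor $L_a^{n_\ell}y_\ell$ lands in $N^2$, it may sit in any position of the product, so the inclusion you cite as $N^2N^{i-1}\subseteq N^{i+1}$ should be stated as: any product of $i$ factors, one lying in $N^2$ and the rest in $N$, lies in $N^{i+1}$ --- which again follows from the however-associated description you already invoke.
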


\begin{corollary}
Let $A$ be a Leibniz algebra and $N$ be an ideal of $A$.  Suppose that $N$ is nilpotent of class $c$ and $A/N^2$ is nilpotent of class $d+1$.  Then $A$ is nilpotent of class at most ${c+1 \choose 2}d - {c \choose 2}$.
\end{corollary}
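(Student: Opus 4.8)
Since the preceding theorem already guarantees that $A$ is nilpotent, the only task is to extract the quantitative bound, so the plan is to run a counting argument along the lower central series. First I would record the structural consequences of the hypotheses. Each $N^{i}$ is an ideal of $A$: this follows from $AN\subseteq N$, $NA\subseteq N$ and the Leibniz identity by induction on $i$. The statement that $A/N^{2}$ is nilpotent of class $d+1$ is exactly the containment $A^{d+2}\subseteq N^{2}$, and since $N^{2}N^{i}\subseteq N^{i+2}\subseteq N^{i+1}$ this immediately upgrades to the family $A^{d+2}N^{i}\subseteq N^{i+1}$ valid for every $i\geq 1$. These containments, together with the chain $A\supseteq N\supseteq N^{2}\supseteq\cdots\supseteq N^{c}\supseteq N^{c+1}=0$, are the only inputs; everything reduces to bounding how many factors of $A$ are needed to push the descending powers $A\supseteq A^{2}\supseteq\cdots$ down through the $c+1$ steps of this filtration.

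To organize the count I would set $\phi(i)$ to be the least $k$ with $A^{k}\subseteq N^{i}$, so that $\phi(0)=1$ and, because $A^{\phi(c+1)}\subseteq N^{c+1}=0$, the class of $A$ is at most $\phi(c+1)-1$. The goal becomes the estimate $\phi(c+1)-1\leq \binom{c+1}{2}d-\binom{c}{2}$. The heart of the matter is a lemma bounding each single-layer increment $\phi(i+1)-\phi(i)$, i.e.\ the number of additional left multiplications by $A$ that carry $N^{i}$ into $N^{i+1}$. The reason a bound is available is that $A$ acts on the quotient $N^{i}/N^{i+1}$ through $A/N^{2}$: indeed $N^{2}N^{i}\subseteq N^{i+1}$ shows $N^{2}$ annihilates this layer, and $A/N^{2}$ has class $d+1$. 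I would prove the increment estimate by induction on $c$ (for instance passing to $A/N^{c}$, whose ideal $N/N^{c}$ has class $c-1$ while $(A/N^{c})/(N^{2}/N^{c})\cong A/N^{2}$ still has class $d+1$), and then sum the increments; the arithmetic of $\sum_{i=1}^{c} i$ against $\sum_{i=1}^{c-1} i$ is precisely what collapses to the binomial coefficients $\binom{c+1}{2}d-\binom{c}{2}$.

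The main obstacle is the increment lemma itself, and it is here that the Leibniz structure must be handled with care rather than formally. One cannot simply multiply by $A^{d+2}$: although $A^{d+2}N^{i}\subseteq N^{i+1}$ holds as a product of subspaces, the nested product $\underbrace{A(A(\cdots(A\,N^{i})\cdots))}_{t\text{ factors}}$ is not contained in $A^{t}N^{i}$. Expanding it by repeated use of $x(yz)=(xy)z+y(xz)$ (equivalently $L_{xy}=L_{x}L_{y}-L_{y}L_{x}$) produces a sum $\sum_{j=1}^{t}A^{j}N^{i}$ in which the low-degree terms $A^{j}N^{i}$ with $j\leq d+1$ have fallen back to lying only in $N^{i}$, not in $N^{i+1}$. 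Controlling these re-introduced low-degree terms is the real difficulty, and it is exactly what makes crossing the deeper layers more expensive than the naive figure $d+2$ would suggest, thereby forcing the coefficient of $d$ to grow like $\binom{c+1}{2}$ rather than linearly in $c$. I expect to resolve it by arranging the induction so that whenever a term $A^{j}N^{i}$ with small $j$ is generated, the relevant factor has already been driven deeper at an earlier stage of the filtration, so that it contributes to $N^{i+1}$ after all; carrying this bookkeeping through all $c$ layers and matching the resulting constants to $\binom{c+1}{2}d-\binom{c}{2}$ (with the degenerate case $c=1$ checked directly) is the delicate computation that remains.
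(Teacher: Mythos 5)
There is a genuine gap, and it sits exactly where you placed your IOU. The paper itself omits the proof of this corollary, stating that it is ``similar to the Lie case'' (Chao's theorem, itself the Lie analogue of P.~Hall's group-theoretic bound), so the standard of comparison is that argument. Your scaffolding --- the thresholds $\phi(i)$, bounding layer-by-layer increments, and the closing binomial arithmetic --- is precisely the bookkeeping of that proof, and your diagnosis of the obstacle (nested products $A(A(\cdots(AN^i)\cdots))$ cannot be replaced by the subspace product $A^tN^i$) is accurate. But the increment lemma is not a deferrable computation: it is the entire mathematical content of the corollary, and you leave it unproven (``the delicate computation that remains''). Worse, the one mechanism you do offer --- that $A$ acts on $N^i/N^{i+1}$ through a nilpotent quotient --- cannot work by itself: the action actually factors through $A/N$ (since $NN^i\subseteq N^{i+1}$), and a nilpotent, even one-dimensional abelian, algebra can act non-nilpotently on a module (scalar action), so no bound on lengths of operator products can follow from nilpotency of the acting algebra alone.

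The missing idea is to expand along the factorization $N^i=NN^{i-1}$ rather than along $A^tN^i$, using the fact that each $L_x$ is a derivation. The generalized product rule gives, exactly,
\[
L_{x_1}\cdots L_{x_k}(uv)\;=\;\sum_{S\subseteq\{1,\dots,k\}}\bigl(L_{x_S}u\bigr)\bigl(L_{x_{S^c}}v\bigr),
\qquad u\in N,\ v\in N^{i-1},
\]
where $L_{x_S}$ denotes the ordered composition over $S$. In each summand, either $|S^c|$ exceeds the threshold (established by induction on $i$) for pushing $N^{i-1}$ into $N^{i}$, so the term lies in $NN^{i}\subseteq N^{i+1}$; or else $|S|$ is large enough that $L_{x_S}u\in A^{|S|+1}\cap N\subseteq N^2$, so the term lies in $N^2N^{i-1}\subseteq N^{i+1}$. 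This is exactly the device that controls your ``re-introduced low-degree terms''; it yields the recursion $g(i)\le g(i-1)+g(1)-1$ for the thresholds, whose sum telescopes to the binomial bound. Without it there is no proof. One further warning: taken literally with the paper's convention ($A/N^2$ of class $d+1$, i.e.\ $A^{d+2}\subseteq N^2$, which is also how you read it), the increments sum to $\binom{c+1}{2}(d+1)-\binom{c}{2}$, not $\binom{c+1}{2}d-\binom{c}{2}$; indeed for $c=1$ the stated bound $d$ is already contradicted by $A\cong A/N^2$ having class $d+1$. So the hypothesis must be read as $A^{d+1}\subseteq N^2$ (class $d$, Chao's original indexing) for any version of this argument --- yours or the classical one --- to reach the stated constant.
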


We say that a Leibniz algebra $A$ satisfies condition $k$ if the only subalgebra $K$ of $A$ with the property $K+A^2$ is $K=A$.  The proof of the next result is the same as in the Lie algebra case \cite{chaostitz}, and follows from the result of \cite{barnesleib} mentioned in the introduction.

\begin{theorem}
Let $A$ be a Leibniz algebra.  Then $A$ is nilpotent if and only if $A$ satisfies condition $k$.
\end{theorem}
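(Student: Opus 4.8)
The plan is to route both implications through the characterization recalled in the introduction: a finite dimensional Leibniz algebra is nilpotent if and only if each of its maximal subalgebras is an ideal (the consequence of \cite{barnesleib}). The pivot in both directions is the ideal $A^2$. First I would record that $A^2 = AA$ is a two-sided ideal: left multiplication gives $A \cdot A^2 = A^3 \subseteq A^2$, while the Leibniz identity rewrites any right product as $(ab)c = a(bc) - b(ac) \in A^3 \subseteq A^2$, so $A^2 A \subseteq A^2$. Hence for any maximal subalgebra $M$ the set $M + A^2$ is again a subalgebra containing $M$, and maximality forces either $M + A^2 = M$ or $M + A^2 = A$.

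Assume first that $A$ satisfies condition $k$, i.e.\ that $K + A^2 = A$ forces $K = A$ for every subalgebra $K$. Taking any maximal subalgebra $M$ and applying the dichotomy, the option $M + A^2 = A$ would give $M = A$, impossible since $M$ is proper; so $A^2 \subseteq M$. But then $AM \subseteq A^2 \subseteq M$ and $MA \subseteq A^2 \subseteq M$, so $M$ is a two-sided ideal. As $M$ was arbitrary, every maximal subalgebra is an ideal, and the cited equivalence gives that $A$ is nilpotent.

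For the converse, suppose $A$ is nilpotent. The cited result makes every maximal subalgebra $M$ an ideal, so $B = A/M$ is a Leibniz algebra; it is nonzero ($M$ proper) and has no proper nonzero subalgebra ($M$ maximal). Being a quotient of a nilpotent algebra, $B$ is nilpotent, so its lower central series reaches $0$ and in particular $B^2 \neq B$ (otherwise $B^j = B$ for all $j$). Since $B^2$ is a subalgebra, the absence of proper nonzero subalgebras forces $B^2 = 0$, that is $A^2 \subseteq M$. This holds for every maximal $M$, so $A^2 \subseteq \Phi(A)$. Finally, if $K$ is a subalgebra with $K + A^2 = A$ but $K \neq A$, then $K$ lies in some maximal subalgebra $M$, whence $A = K + A^2 \subseteq M$, a contradiction; so $K = A$ and condition $k$ holds.

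The arguments are formal once \cite{barnesleib} is available, which is why the authors remark that the proof copies the Lie case of \cite{chaostitz}. The only places where I would be careful about the loss of anticommutativity are the two Leibniz-specific verifications: that $A^2$ is a \emph{right} ideal (this is exactly where the Leibniz identity, rather than skew-symmetry, enters), and that $A^2 \subseteq M$ by itself already forces $M$ to be a genuine two-sided ideal rather than merely a one-sided one.
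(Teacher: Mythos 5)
Your proof is correct and follows exactly the route the paper indicates: the paper omits the proof, saying it is the same as the Lie case of \cite{chaostitz} and follows from the Barnes equivalence (nilpotency $\Leftrightarrow$ all maximal subalgebras are ideals), which is precisely the pivot you use in both directions. Your added care about $A^2$ being a two-sided ideal via the Leibniz identity is the right Leibniz-specific check, and the argument is complete.
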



Let $S$ be a subset of the Lie algebra $A$. The normal closure, $S^{A}$, of $S$ is the smallest ideal of $A$ that contains $S$. A Lie algebra, $A$, is nilpotent if and only if there is exactly one non-zero nilpotent subalgebra whose normal closure is $A$ \cite{chaostitz} (there is a requirement that the dimension of $A$ is large compared to the cardinality of the field). This result fails for Leibniz algebras.

\begin{example}
Let $A$ be a Leibniz algebra with basis $\{a,a^2\}$, and $aa^2=a^2$. $H= \spn\{a-a^2\}$ is a nilpotent subalgebra and, since $Ha$ is not contained in $H$, $H^A=A$. $H$ is unique with respect to this property. For any proper subalgebra is of the form $J=\spn\{a+\alpha a^2\}$, and $J$ is a subalgebra if and only if $\alpha=-1$. Therefore although $A$ is not nilpotent, the nilpotent subalgebra whose normal closure is $A$ is unique.
\end{example}

A Lie algebra is an $S^*$ algebra if each non-abelian subalgebra $H$ has $\dim(H/H^2) \geq 2$. A Lie algebra is an $S^*$ algebra if and only if it is nilpotent \cite{chaoS*} . This result does not extend directly to Leibniz algebras.

\begin{example}
Let $A$ be a Leibniz algebra with basis $\{a, a^2\}$, and $aa^2=0$.  Then $A$ is nilpotent, however $\dim(A/A^2)=1$.
\end{example}

We can alter the definition of an $S^*$ algebra to obtain a property equivalent to nilpotency 
which restricts to the original definition in the Lie algebra case.
Define a Leibniz algebra to be an $S^*$ algebra if every proper non-abelian subalgebra $H$ has either $\dim(H/H^2) \geq 2$ or $H$ is nilpotent and generated by one element.

\begin{theorem}\label{S*}
A Leibniz algebra is an S$^*$ algebra if and only if it is nilpotent.
\end{theorem}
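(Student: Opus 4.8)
The plan is to prove both implications, reading the defining condition as a dichotomy that every non-abelian subalgebra must satisfy (in particular I will use it for $A$ regarded as a subalgebra of itself). The reverse implication is the routine one, so I would dispatch it first; the forward implication I would attack by a minimal counterexample, reducing to the structure of a minimal non-nilpotent Leibniz algebra.

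For the direction that a nilpotent algebra is an $S^*$ algebra, let $H$ be a non-abelian subalgebra. Subalgebras of nilpotent algebras are nilpotent, so $H$ is nilpotent; being non-abelian it is nonzero with $H^2\neq 0$, and nilpotency forces $H\neq H^2$, hence $\dim(H/H^2)\geq 1$. If this dimension is at least $2$ the first clause holds, so assume it equals $1$. Here I would invoke the nilpotent analogue of Nakayama's lemma: if $x+H^2$ spans $H/H^2$ and $B=\alg{x}$, then $B+H^2=H$ yields $H^2\subseteq B+H^3$ and hence $H=B+H^n$ for all $n$; nilpotency then gives $H=B$, so $H$ is generated by one element. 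Thus the second clause holds and $A$ is an $S^*$ algebra.

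For the converse I would argue by contradiction, choosing an $S^*$ algebra $A$ that is not nilpotent of least possible dimension. Every proper subalgebra inherits the $S^*$ property and has smaller dimension, so by minimality every proper subalgebra of $A$ is nilpotent; that is, $A$ is a minimal non-nilpotent algebra. The first key observation is that $A^2$ is an ideal (the Leibniz identity gives $A^2A\subseteq A^3$), so every subspace $V$ with $A^2\subseteq V\subseteq A$ is automatically a subalgebra, since $VV\subseteq A^2\subseteq V$. The second ingredient is Engel's theorem for Leibniz algebras: as $A$ is not nilpotent there is an $a$ with $L_a$ not nilpotent, and $L_a$ is a derivation. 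Now suppose $\dim(A/A^2)\geq 2$. Then I can pick a codimension-one subspace $V$ with $A^2\subseteq V$ and $a\in V$, which by the previous remark is a proper subalgebra. Since $\im L_a=aA\subseteq A^2\subseteq V$, the power $L_a^{m+1}$ on $A$ factors through $L_a^m$ on $V$, so $L_a|_V$ is again non-nilpotent; hence $V$ is a non-nilpotent proper subalgebra, contradicting minimality. Therefore $\dim(A/A^2)\leq 1$. Finally I apply the defining dichotomy to $A$ itself, which is non-abelian because it is not nilpotent: the first clause fails since $\dim(A/A^2)\leq 1$, so the second must hold and $A$ would be nilpotent and one-generated, contradicting that $A$ is not nilpotent. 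This contradiction shows no minimal counterexample exists.

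I expect the main obstacle to be this structural step, namely forcing $\dim(A/A^2)\leq 1$ for a minimal non-nilpotent algebra; the hyperplane-subalgebra construction together with Engel's theorem is what makes it go through. The subtle point is that the extra \emph{nilpotent and one-generated} clause in the definition is exactly what absorbs the surviving case $\dim(A/A^2)=1$ (realized by the cyclic algebra of the preceding example), so that the property still characterizes nilpotency. The supporting facts that $L_a$ is a derivation and that $A^2$ is a two-sided ideal both follow directly from the Leibniz identity.
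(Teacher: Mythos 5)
Your proof is correct, and it follows the paper's skeleton — the easy direction via nilpotency of subalgebras, and for the converse a minimal non-nilpotent counterexample reduced to $\dim(A/A^2)\le 1$ and then killed by applying the dichotomy to $A$ itself — but the two supporting steps are established by genuinely different means. For the forward direction, the paper invokes the Frattini-theory fact that $H^2=\Phi(H)$ for nilpotent $H$ (so codimension one forces one-generation via the non-generator property of $\Phi$); your Nakayama-style induction $H=B+H^n$ proves the same statement from scratch and is more self-contained. For the key lemma that a minimal non-nilpotent algebra has $\dim(A/A^2)\le 1$, the paper takes two distinct maximal subalgebras containing $A^2$, notes they are ideals, and uses that the sum of two nilpotent ideals is nilpotent; you instead use Engel's theorem to produce $a$ with $L_a$ non-nilpotent and build a hyperplane subalgebra $V\supseteq A^2$ with $a\in V$, on which $L_a$ stays non-nilpotent since $aA\subseteq A^2\subseteq V$. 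Your route trades the sum-of-nilpotent-ideals fact (which the paper uses without citation) for Engel's theorem, which the introduction explicitly records as available for Leibniz algebras; both are legitimate, and your hyperplane argument is arguably more elementary. Finally, your decision to read the defining condition as applying to every non-abelian subalgebra, including $A$ itself, is the right call and worth flagging: the paper's definition says ``proper,'' but its own closing step (``since $A$ is an $S^*$ algebra, it is generated by one element and is nilpotent'') needs the condition on $A$ itself, and under the literal ``proper'' reading the theorem would in fact fail — the two-dimensional cyclic algebra with $aa^2=a^2$ has only abelian proper subalgebras, hence would vacuously be $S^*$, yet it is not nilpotent.
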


\begin{lemma}\label{S*1}
Let $A$ be a non-abelian nilpotent Leibniz algebra. Then either $\dim(A/A^2) \geq 2$ or $A$ is generated by one element. 
\end{lemma}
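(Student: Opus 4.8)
The plan is to argue the form built into the statement: assuming $\dim(A/A^2)$ is not $\ge 2$, I will show $A$ is generated by a single element. First I would pin down that for a non-abelian nilpotent $A$ one always has $\dim(A/A^2)\ge 1$. Indeed, if instead $A=A^2$, then $A^{j+1}=AA^j=A$ for all $j$ by an easy induction on the lower central series, contradicting $A^{t+1}=0$ together with $A\neq 0$ (which holds because $A$ is non-abelian, hence $A^2\neq 0$ and so $A\neq 0$). Consequently the only case not covered by $\dim(A/A^2)\ge 2$ is $\dim(A/A^2)=1$, and that is the case I must handle.

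So suppose $\dim(A/A^2)=1$. I would choose $a\in A$ whose image spans $A/A^2$; equivalently $a\notin A^2$, so that $\spn\{a\}+A^2=A$. Let $K=\alg{a}$ be the subalgebra of $A$ generated by $a$; the goal is to prove $K=A$. Since $K\supseteq\spn\{a\}$, we immediately get $K+A^2\supseteq\spn\{a\}+A^2=A$, hence $K+A^2=A$.

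The key bridge is the Frattini-type property already available to us. By the Theorem characterizing nilpotency via condition $k$, a nilpotent Leibniz algebra has the feature that the only subalgebra $K$ with $K+A^2=A$ is $K=A$ itself. Applying this to $K=\alg{a}$ yields $K=A$, so $A$ is generated by the one element $a$, which completes the proof.

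I expect the genuinely substantive content to sit inside condition $k$, which we are permitted to assume; granting that, the lemma is essentially a one-line deduction, and the only point needing care is ruling out $\dim(A/A^2)=0$, handled above. Should one wish to prove the implication $K+A^2=A\Rightarrow K=A$ directly rather than cite condition $k$, the standard route is a Nakayama-style induction: from $A=K+A^2$ one shows $A^j\subseteq K+A^{j+1}$ by observing that every product of $j$ factors drawn from $K+A^2$ either lies in $K$ (when all factors come from the subalgebra $K$) or contributes total degree at least $j+1$ (when some factor lies in $A^2$), and then feeding in $A^{t+1}=0$ collapses the series to $A=K$. This degree-counting relies on the stated fact that $A^j$ is spanned by products of $j$ elements regardless of association.
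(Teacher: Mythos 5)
Your proposal is correct and follows essentially the same route as the paper: rule out $\dim(A/A^2)=0$ using nilpotency, then in the case $\dim(A/A^2)=1$ conclude that an element $a\notin A^2$ generates $A$ because a subalgebra supplementing $A^2$ in a nilpotent algebra must be all of $A$. The paper cites this last fact in the form $A^2=\Phi(A)$ while you cite the paper's condition-$k$ theorem (and sketch a Nakayama-style proof of it), but these are the same Frattini-type non-generator property, so the arguments coincide in substance.
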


\begin{proof}
Since $A$ is nilpotent, $A^2= \Phi(A)$, the Frattini subalgebra of $A$. Clearly $\dim(A/A^2) \neq 0$ since $A$ is nilpotent. If $\dim(A/A^2) = 1$, then $A$ is generated by one element. Otherwise $\dim(A/A^2) \geq 2$. 
\end{proof}

\begin{lemma}\label{S*2}
If $A$ is not nilpotent but all proper subalgebras of $A$ are nilpotent, then $\dim(A/A^2) \leq 1$.
\end{lemma}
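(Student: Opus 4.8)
The plan is to argue by contradiction. Assuming $\dim(A/A^2) \ge 2$, I will exhibit $A$ as the sum of two \emph{proper} ideals; since every proper subalgebra of $A$ is nilpotent, both summands are nilpotent ideals, and I will conclude that $A$ itself is nilpotent, contradicting the hypothesis. First I would dispose of the trivial case $A^2 = A$, in which $\dim(A/A^2)=0$ and there is nothing to prove, so I assume $A^2 \subsetneq A$; note that $A^2$ is then a proper, hence nilpotent, ideal.

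The key construction is a pair of codimension-one ideals. Since $A/A^2$ is a vector space of dimension at least two, it contains two distinct hyperplanes $W_1 \ne W_2$; let $M_1, M_2$ be their preimages in $A$. Each $M_i$ contains $A^2$, and any subspace $M$ containing $A^2$ is automatically an ideal, because $AM, MA \subseteq AA = A^2 \subseteq M$. Each $M_i$ is proper (its image $W_i$ is a proper subspace), so by hypothesis each $M_i$ is nilpotent. Because $W_1$ and $W_2$ are distinct hyperplanes in a space of dimension $\ge 2$, their sum is the whole quotient, so $M_1 + M_2 = A$.

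The remaining, and central, step is to show that the sum of two nilpotent ideals is nilpotent. I would derive this from the first theorem of this section, namely that for a nilpotent ideal $N$ the algebra $A$ is nilpotent if and only if $A/N^2$ is nilpotent. Writing $C = M_1 + M_2$ with the nilpotent ideal $M_1$, that theorem reduces the nilpotency of $C$ to that of $C/M_1^2$, in which the image of $M_1$ is abelian; applying the theorem a second time with the (still nilpotent) image of $M_2$ reduces matters to a sum $P+Q$ of two abelian ideals. For such a sum a direct computation gives $C^2 = PQ + QP \subseteq P \cap Q$ and hence $C^3 = 0$, so $P+Q$ is nilpotent. Unwinding the two reductions shows $M_1 + M_2 = A$ is nilpotent, the desired contradiction, so $\dim(A/A^2) \le 1$.

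The delicate points are mostly bookkeeping: verifying that the preimages of hyperplanes are genuine proper subalgebras containing $A^2$ (so that the nilpotency hypothesis applies and $M_1 + M_2 = A$), and checking that two distinct hyperplanes spanning the quotient exist regardless of the ground field, which holds since a space of dimension $\ge 2$ has at least two hyperplanes over any field. The one genuinely substantive ingredient, and the place I expect the real work to lie, is the sum-of-nilpotent-ideals fact; once it is available through the two applications of the first theorem together with the elementary abelian case, the rest of the argument is immediate.
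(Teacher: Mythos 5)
Your proof is correct and takes essentially the same route as the paper's: the paper likewise assumes $\dim(A/A^2) \geq 2$, takes two distinct maximal subalgebras $M$ and $N$ containing $A^2$ (your hyperplane preimages), notes that they are ideals, and concludes that $A = M + N$ is nilpotent, a contradiction. The only difference is that the paper asserts without justification that the sum of the two nilpotent ideals is nilpotent, whereas you supply a proof of that step via two applications of the section's first theorem together with the direct computation for a sum of abelian ideals.
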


\begin{proof}
Suppose that $\dim(A/A^2) \geq 2$. Then there exist distinct maximal subalgebras, $M$ and $N$ which contain $A^2$. Hence $M$ and $N$ are ideals and $A=M+N$ is nilpotent
, a contradiction.
\end{proof}

\begin{proof}[Proof of Theorem \ref{S*}] If $A$ is nilpotent, then every subalgebra is nilpotent, so $A$ is an $S^*$ algebra by Lemma \ref{S*1}. Conversely, suppose that there exists an $S^*$ algebra that is not nilpotent. Let $A$ be one on smallest dimension. All proper subalgebras of $A$ are $S^*$ algebras, hence are nilpotent. Thus $\dim(A/A^2) \leq 1$ by Lemma \ref{S*2}. Since $A$ is an $S^*$ algebra, it is generated by one element and is nilpotent, a contradiction.
\end{proof}

\section{Cyclic Leibniz algebra}

In the last section, we found that Leibniz algebras generated by one element provide counterexamples to the extension of several results from Lie to Leibniz algebras.  It would seem to be of interest to find properties of these algebras.  In this section we study them in their own right.

Let $A$ be a cyclic Leibniz algebra generated by $a$ and let $L_a$ denote left multiplication on $A$ by $a$.  Let $\{ a, a^2, \ldots, a^n \}$ be a basis for $A$ and $aa^n = \alpha_1 a + \cdots + \alpha_n a^n$.  The Leibniz identity on $a$, $a^2$, and $a$ shows that $\alpha_1 = 0$.  
Thus $A^2$ has basis $\{ a^2, \ldots, a^n \}$.  Let $T$ be the matrix for $L_a$ with respect to $\{ a, a^2, \ldots, a^n \}$.  $T$ is the companion matrix for $p(x) = x^n - \alpha_n x^{n-1} - \cdots - \alpha_2 x = p_1(x)^{n_1} \cdots p_s(x)^{n_s},$ where the $p_j$ are the distinct irreducible factors of $p(x)$.  We will continue using this notation throughout this section.  We will show:

\begin{theorem}
Let $A$ be a cyclic Leibniz algebra generated by $a$, and notation as in the last paragraph.  Then $\Phi(A) = \{ b \in A : q(L_a) (b) = 0 \},$ where $q(x) = p_1(x)^{n_1-1} \cdots p_s(x)^{n_s-1}$.
\end{theorem}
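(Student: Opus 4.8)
The plan is to reduce the whole computation to the structure of $A$ as a module over the polynomial ring generated by $L_a$, after first pinning down the multiplication precisely. Throughout, write $F$ for the base field, let $A = Fa \oplus A^2$ be the decomposition coming from the basis, and let $\lambda_x \in F$ denote the $a$-coordinate of $x$ in this decomposition.

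The first and most important step is to establish the structural identity $A^2 A = 0$, i.e.\ that left multiplication by any element of $A^2$ is zero. Starting from $a^2 a = 0$ (which is just the Leibniz identity applied to $a,a,a$), one proves by a double induction, using repeatedly that $L_a$ is a derivation, that $a^i a^j = 0$ for all $i \ge 2$ and $j \ge 1$; since $\{a^2,\dots,a^n\}$ spans $A^2$, this yields $A^2 A = 0$. Expanding $xy$ for $x = \lambda_x a + u$ and $y = \mu a + v$ with $u,v \in A^2$, every term except $\lambda_x(\mu a^2 + L_a v)$ vanishes, so the multiplication collapses to the single formula $xy = \lambda_x L_a(y)$. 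I expect this identity to be the main obstacle: it is the fact that makes the argument work, and it has to be extracted from the Leibniz identity rather than assumed.

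With this formula in hand the subalgebra lattice becomes transparent. Any $L_a$-invariant subspace $B$ is automatically a subalgebra, since $xy = \lambda_x L_a(y) \in B$ whenever $L_a(y) \in B$. Conversely, I would show that every maximal subalgebra is $L_a$-invariant: a subalgebra contained in $A^2$ cannot be maximal, as it sits inside the proper subalgebra $A^2$, while for a subalgebra $M$ with $\lambda$-coordinates filling $A/A^2$, writing $M = Fe \oplus (M \cap A^2)$ with $\lambda_e = 1$, the subalgebra conditions force $L_a(M \cap A^2) \subseteq M \cap A^2$ and $L_a(e) \in M \cap A^2$, hence $L_a(M) \subseteq M$. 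Combining the two directions (and noting that a maximal proper $L_a$-invariant subspace other than $A^2$ already pokes out of $A^2$, so any strictly larger subalgebra is again $L_a$-invariant) gives that the maximal subalgebras of $A$ are exactly the maximal proper $L_a$-invariant subspaces.

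Finally I would pass to module theory. Since $L_a$ is a companion matrix, its minimal and characteristic polynomials both equal $p(x)$, so $A \cong F[x]/(p)$ as an $F[x]$-module with $a \leftrightarrow 1$, $L_a$ acting as multiplication by $x$, and $A^2 = \operatorname{im} L_a = (x)/(p)$. Over the PID $F[x]$ the submodules of $F[x]/(p)$ correspond to monic divisors of $p$, and the maximal proper ones are exactly $(p_i)/(p)$ for the distinct irreducible factors $p_i$ (one of which is $x$, since $\alpha_1 = 0$ forces $x \mid p$; this factor gives $A^2$). Intersecting them yields $\bigcap_i (p_i)/(p) = (p_1 \cdots p_s)/(p)$, and because $p = q \cdot (p_1 \cdots p_s)$ this ideal is precisely $\{\, b : q(L_a)(b) = 0 \,\}$. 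Identifying this intersection with $\Phi(A)$ via the previous step completes the proof. As consistency checks, the nilpotent case $p = x^n$ gives $q = x^{n-1}$ and recovers $\Phi(A) = A^2$, and the two two-dimensional algebras of the previous section return $\Phi(A) = 0$ and $\Phi(A) = A^2$ respectively.
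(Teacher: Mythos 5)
Your proof is correct, and its algebraic core is the same as the paper's: both arguments hinge on the fact that $L_b=0$ for every $b\in A^2$, from which every maximal subalgebra is $L_a$-invariant (the paper gets this by writing $a=m+c$ with $m\in M$, $c\in A^2$, so $L_a=L_m$; your multiplication formula $xy=\lambda_x L_a(y)$ is the same fact repackaged). The genuine divergence is in the linear algebra that follows. The paper runs the primary decomposition $A=W_1\oplus\cdots\oplus W_s$ of $L_a$, constructs the candidate maximal subalgebras $M_j$ explicitly, proves their maximality via irreducibility of the quotients $U_{j,n_j}/U_{j,n_j-1}$, and identifies an arbitrary maximal subalgebra with some $M_j$ by a divisibility argument on minimal polynomials. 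You instead identify $A\cong F[x]/(p)$ as a cyclic module over the PID $F[x]$ and read off the whole lattice of $L_a$-invariant subspaces at once: submodules correspond to monic divisors of $p$, the maximal proper ones are the $(p_i)/(p)$, and their intersection is $(p_1\cdots p_s)/(p)=\ker q(L_a)$. This is cleaner, subsumes the paper's minimal-polynomial step, and yields the paper's corollary describing all maximal subalgebras for free; what it does not produce is the primary decomposition itself, which the paper reuses verbatim in its subsequent results on Cartan subalgebras, minimal ideals, and the socle. Two smaller points. First, you prove $A^2A=0$ by induction from the Leibniz identity, a fact the paper asserts without proof; your instinct that this is the load-bearing step is sound, and spelling it out is a gain in completeness (though a single induction showing $a^iA=0$ for $i\ge 2$ suffices, no double induction is needed). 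Second, your clause ``a subalgebra contained in $A^2$ cannot be maximal'' is literally false, since $A^2$ itself is a maximal subalgebra (it has codimension one); you need ``properly contained,'' so that a maximal subalgebra lying inside $A^2$ must equal $A^2$, which is $L_a$-invariant anyway --- your later parenthetical about subspaces ``other than $A^2$'' shows you handle this case correctly, so it is a slip of wording rather than a gap.
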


\begin{proof}
Let $A = W_1 \oplus \cdots \oplus W_s$ be the associated primary decomposition of $A$ with respect to $L_a$.  Then $W_j = \{ b \in A : p_j(L_a)^{n_j}(b) = 0 \}$.  Here $p(x)$ is also the minimal polynomial for $L_a$ on $A$, and therefore each $W_j$ is of the form $0 \subset U_{j,1} \subset \cdots \subset U_{j,n_j} = W_j$, where $U_{j,i} = \{ b \in A : p_j(L_a)^{i}(b) = 0 \}$, each $U_{j,i+1}/U_{j,i}$ is irreducible under the induced action of $L_a$, and $\dim(U_{j,i}) = i \deg(p_j(x))$.  Since $x$ is a factor of $p(x)$, we let $p_1(x) = x$.  
For $j \geq 2$, $W_j \subset A^2$ and for $i \neq n_1$, $U_{1,i} \subset A^2$.  
$A^2$ is abelian and left multiplication by $b \in A^2$ has $L_b = 0$ on $A$.  Hence
\begin{center}
\begin{tabular}{rcll}
$W_j  W_k$ &=& 0 & \text{ for } $1 \leq j,k \leq s$\\
$W_j \, W_1$ &=& 0 & \text{ for } $2 \leq j$\\
$W_1 \, W_j$ &$\subset$& $W_j$ & \text{ for } $1 \leq j \leq s$\\
$U_{1,n_1-1} \, W_j$ &=& 0 & \text{ for } $2 \leq j.$
\end{tabular}
\end{center}
Hence each $U_{j,i}$ except $U_{1,n_1} = W_1$ is an ideal in $A$.  $W_1$ is generally not a right ideal.

Let $M_j = W_1 \oplus \cdots \oplus U_{j,n_j-1} \oplus \cdots \oplus W_s$.  Since $\dim(A/A^2) = 1$, $A^2 = U_{1,n_1-1} \oplus W_2 \oplus \cdots \oplus W_s$, and $M_1$ is a maximal subalgebra of $A$.  We show that $M_j$, $j \geq 2$, is a maximal subalgebra of $A$.  Since $a=b+c$, where $b \in W_1$ and $c \in A^2$, $L_a = L_b$.  It follows that any subalgebra that contains $W_1$ is $L_a$ invariant.  If $M$ is a subalgebra of $A$ that contains $M_j$ properly, then $M \cap U_{j,n_j}$  contains $U_{j,n_j-1}$ properly. Since $U_{j,n_j}/U_{j,n_j-1}$ is irreducible in $A/U_{j,n_j-1}$, $M \cap U_{j,n_j} = U_{j,n_j}$ and $M=A$. Thus each $M_j$ is maximal in $A$ and $\Phi(A) \subset \cap M_j$.

Let $M$ be a maximal subalgebra of $A$. If $M=A^2$, then $M=\{b : g(L_a)(b)=0\}$ where $g(x)=p(x)/x$, so $M=M_1$. Suppose that $M \neq A^2$. Then $A=M+A^2$. Hence $a=m+c$, $m \in M$ and $c \in A^2$, and $L_a=L_m$. Hence $M$ is invariant under $L_a$. Thus the minimum polynomial $g(x)$ for $L_a$ on $M$ divides $p(x)$. If there is a polynomial $h(x)$ properly between $g(x)$ and $p(x)$, then the space $H$ annihilated by $h(L_a)$ is properly between $M$ and $A$, is invariant under left multiplications by $a$ and by any element in $A^2$, hence by any element in $A$ so it is a subalgebra. Since the minimum polynomial and characteristic polynomial for $L_a$ on $A$ are equal, the same is true of invariant subspaces.  Hence $\dim(H) = \deg(h(x))$ and $H$ is properly between $M$ and $A$, a contradiction.  Hence $M$ is the space annihilated by $g(x) = p(x) / p_j(x)$ for some $j$ and $M=M_j$. Therefore $\Phi(A)=\cap_{j=1}^s M_j = \{b \in A : q(L_a)(b) = 0\}$ where $q(x)=p_1(x)^{n_1-1} \cdots p_s(x)^{n_s-1}$.
%
%
\end{proof}


As a special case we obtain Corollary 3 of \cite{allison}.

\begin{corollary}
$\Phi(A)=0$ if and only if $p(x)$ is the product of distinct prime factors.
\end{corollary}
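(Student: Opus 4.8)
The plan is to read the statement directly off the preceding theorem, which identifies $\Phi(A)$ with $\ker q(L_a)$, where $q(x) = p_1(x)^{n_1-1} \cdots p_s(x)^{n_s-1}$. The condition that $p(x)$ be a product of distinct prime (irreducible) factors is exactly the condition that every $n_j = 1$, and this in turn is exactly the condition that every exponent $n_j - 1$ vanish, i.e.\ that $q(x) = 1$ as a monic polynomial. So the corollary reduces to the elementary assertion that $\ker q(L_a) = 0$ if and only if $q(x) = 1$.

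For the forward implication (assuming $p$ squarefree) I would simply observe that $q(x) = 1$ forces $q(L_a)$ to be the identity map on $A$, whose kernel is $0$; hence $\Phi(A) = 0$. For the converse I would argue contrapositively: suppose $p$ is not squarefree, so that $n_j \geq 2$ for some $j$. Then $p_j(x)$ divides $q(x)$, and the main step is to produce a nonzero vector annihilated by $q(L_a)$. Here I would use that $T = L_a$ is the companion matrix of $p(x)$, so $p$ is simultaneously the minimal and the characteristic polynomial of $L_a$ on $A$, as noted in the theorem's proof. Consequently every irreducible factor $p_j$ of $p$ has $\ker p_j(L_a) \neq 0$; equivalently, in the primary/cyclic decomposition used in the theorem, $U_{j,1} \neq 0$. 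Since $p_j \mid q$, we have $\ker p_j(L_a) \subseteq \ker q(L_a)$, and therefore $\Phi(A) = \ker q(L_a) \neq 0$.

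I expect no serious obstacle: once the theorem is invoked, the content is the standard linear-algebra fact that a nonconstant divisor of the minimal polynomial of $L_a$ has nontrivial kernel. The only points requiring care are the bookkeeping that ``product of distinct prime factors'' translates precisely to ``$q(x) = 1$,'' and the observation that $q$ divides $p$, so that a nonzero element killed by some $p_j(L_a)$ with $n_j \geq 2$ is automatically killed by $q(L_a)$.
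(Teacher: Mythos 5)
Your proof is correct and follows the same route the paper intends: the paper derives this corollary immediately from the preceding theorem (it offers no separate argument), and your write-up simply makes explicit the two elementary facts involved, namely that squarefreeness of $p$ is equivalent to $q(x)=1$, and that any nonconstant divisor of the minimal polynomial of $L_a$ has nontrivial kernel. No gaps; the bookkeeping you flag is exactly what is needed.
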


\begin{corollary}
The maximal subalgebras of $A$ are precisely the null spaces of $r_j(L_a)$, where $r_j(x) = p(x) / p_j(x)$ for $j = 1, \ldots, s$.
\end{corollary}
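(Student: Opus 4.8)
The plan is to read the result off directly from the proof of the preceding theorem, where the maximal subalgebras were already enumerated. That argument showed that every maximal subalgebra of $A$ coincides with one of the subspaces $M_1, \ldots, M_s$, and conversely that each $M_j$ is maximal. It therefore remains only to identify $M_j$ with the null space of $r_j(L_a)$, where $r_j(x) = p(x)/p_j(x)$.

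First I would prove the containment $M_j \subseteq \ker r_j(L_a)$. Recalling that $M_j = W_1 \oplus \cdots \oplus U_{j,n_j-1} \oplus \cdots \oplus W_s$, with $W_k = \ker p_k(L_a)^{n_k}$ and $U_{j,n_j-1} = \ker p_j(L_a)^{n_j-1}$, I would check that $r_j(L_a)$ annihilates each summand. Since $r_j(x) = p_1(x)^{n_1} \cdots p_j(x)^{n_j-1} \cdots p_s(x)^{n_s}$ retains the full factor $p_k(x)^{n_k}$ for every $k \neq j$, it kills $W_k$; and its remaining factor $p_j(x)^{n_j-1}$ kills $U_{j,n_j-1}$. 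Hence all of $M_j$ lies in $\ker r_j(L_a)$.

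Next I would match dimensions. Because $A$ is cyclic as a $k[L_a]$-module, the minimal and characteristic polynomials of $L_a$ both equal $p(x)$, so $A \cong k[x]/(p(x))$ with $L_a$ acting as multiplication by $x$. For the divisor $r_j(x)$ of $p(x)$, the kernel of multiplication by $r_j(x)$ consists of the multiples of $p(x)/r_j(x) = p_j(x)$ modulo $p(x)$, a subspace of dimension $\deg r_j(x) = n - \deg p_j(x)$. On the other hand, passing from $W_j$ down to $U_{j,n_j-1}$ drops exactly $\deg p_j(x)$ dimensions, so $\dim M_j = n - \deg p_j(x)$ as well. Combining the containment with equal dimensions yields $M_j = \ker r_j(L_a)$.

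I do not anticipate a real obstacle, as the classification of maximal subalgebras is already finished in the theorem; the corollary merely repackages it. The one point demanding care is the dimension count, which relies on the coincidence of the minimal and characteristic polynomials of the companion matrix $L_a$. This coincidence is precisely what forces each $L_a$-invariant subspace cut out by a divisor of $p(x)$ to have dimension equal to that divisor's degree, thereby upgrading the inclusion $M_j \subseteq \ker r_j(L_a)$ to an equality.
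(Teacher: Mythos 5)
Your proposal is correct and follows essentially the same route as the paper: the corollary is read off from the proof of the preceding theorem, which already showed that the maximal subalgebras are exactly $M_1, \ldots, M_s$ and that each coincides with the space annihilated by $p(L_a)/p_j(L_a)$. Your explicit dimension count via $A \cong k[x]/(p(x))$ just fills in the detail the paper handles with its remark that the minimal and characteristic polynomials of $L_a$ coincide, so invariant subspaces cut out by divisors of $p(x)$ have dimension equal to the divisor's degree.
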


Now let $A_0$ and $A_1$ be the Fitting null and one components of $L_a$ acting on $A$. Since $L_a$ is a derivation of $A$, $A_0$ is a subalgebra of $A$. $L_a$ acts nilpotently on $A_0$ and $L_b =0$ when $b \in A^2$. Therefore for each $c \in A$, $L_c$ is nilpotent on $A_0$ and $A_0$ is nilpotent by Engel's theorem.  Let $a=b+c$, where $b \in A_0$ and $c \in A_1$. Then $L_a=L_b$ since $A_1 \subset A^2$ yields that $L_c=0$. Then $bA_1=aA_1=A_1$. For any non-zero $x \in A_1$, $bx$ is non-zero in $A_1$ and $x$ is not in the normalizer of $A_0$. Hence $A_1 \cap N_A(A_0) = 0$.
and $A_0 = N_A(A_0)$. Hence $A_0$ is a Cartan subalgebra of $A$.

Conversely let $C$ be a Cartan subalgebra of $A$ and $c \in C$. Then $c=d+e$, $d \in A_0$ and $e \in A_1$. Since $A_1 \subset A^2$, $A_1A_1=A_1A_0=0$, $A_0A_1=A_1$, and $A_0A_0 \subset A_0$. Therefore $A_1$ is an abelian subalgebra of $A$. Now $0= L_c^n(c)=L_d^n(c)=L_d^n(d+e)=L_d^n(e)=L_c^n(e)$, where we used that $eA=0$ and $d \in A_0$ which is nilpotent. Since $L_c$ is non-singular on $A_1$, $e=0$ and $c=d$. Hence $C \subset A_0$. Since $C$ is a Cartan subalgebra and $A_0$ is nilpotent, $C=A_0$ and $A_0$ is the unique Cartan subalgebra of $A$.

\begin{theorem} $A$ has a unique Cartan subalgebra. It is the Fitting null component of $L_a$ acting on $A$.
\end{theorem}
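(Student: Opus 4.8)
The plan is to identify the Fitting null component $A_0$ of $L_a$ as a Cartan subalgebra and then to show that no other Cartan subalgebra can exist. Recalling that a Cartan subalgebra is a nilpotent self-normalizing subalgebra, the work splits into two halves: verifying that $A_0$ is nilpotent and satisfies $N_A(A_0) = A_0$, and verifying that an arbitrary Cartan subalgebra $C$ must equal $A_0$.

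For the first half I would proceed as follows. The Leibniz identity is exactly the statement that $L_a$ is a derivation, so $A_0$, being the Fitting null component of a derivation, is immediately a subalgebra. For nilpotency I would use the structural facts already in hand: $A_1 \subseteq A^2$, left multiplication by any element of $A^2$ is zero, and $\dim(A/A^2) = 1$. Together these force every left multiplication to be a scalar multiple of $L_a$; restricted to $A_0$ each such operator is therefore nilpotent, and Engel's theorem for Leibniz algebras (cited in the introduction) yields that $A_0$ is nilpotent. Self-normalization then comes from the invertibility of $L_a$ on $A_1$: writing $a = b + c$ with $b \in A_0$ and $c \in A_1$, any $x = x_0 + x_1$ in $N_A(A_0)$ satisfies $A_0 x \subseteq A_0$, whence $b x_1 = L_a(x_1) \in A_0 \cap A_1 = 0$; since $L_a$ is non-singular on $A_1$ this gives $x_1 = 0$, so $x \in A_0$ and $N_A(A_0) = A_0$.

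For the second half, let $C$ be any Cartan subalgebra and take $c \in C$, decomposed as $c = d + e$ with $d \in A_0$ and $e \in A_1$. Since $e \in A_1 \subseteq A^2$ kills left multiplication, $L_c = L_d$. Nilpotency of $C$ makes $L_c$ nilpotent on $C$, so $L_c^{\,n}(c) = 0$ for large $n$; expanding $L_d^{\,n}(d+e)$ and using that $L_d$ is nilpotent on $A_0$ and that $eA = 0$, this collapses to $L_d^{\,n}(e) = 0$. When the scalar attached to $c$ is nonzero, $L_d$ is non-singular on $A_1$ and we conclude $e = 0$, i.e. $c \in A_0$. Having shown $C \subseteq A_0$, I would finish using the normalizer condition: $A_0$ is nilpotent, so if $C$ were a proper subalgebra of $A_0$ it would be properly contained in its normalizer inside $A_0$, contradicting $N_A(C) = C$. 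Hence $C = A_0$, and the Cartan subalgebra is unique.

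I expect the main obstacle to be the vanishing of the $A_1$-component $e$ in the uniqueness step. The operator $L_c$ is non-singular on $A_1$ only when $c$ has a nonzero component along $a$ modulo $A^2$, so elements of $C \cap A^2$ are not immediately handled by the computation above; ruling these out will require an extra argument showing that a Cartan subalgebra cannot be buried inside $A^2$ (for instance via self-normalization, since $A^2$ acts trivially on the left and so any such $C$ would fail to be self-normalizing). Throughout, the Leibniz left/right asymmetry must be respected, in particular the relations $A_1 A = 0$ and $A_0 A_1 = A_1$ recorded just above.
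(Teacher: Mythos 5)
Your proposal follows the paper's proof almost step for step: the Fitting decomposition $A = A_0 \oplus A_1$ of $L_a$, the observation that every $L_x$ is a scalar multiple of $L_a$ so that Engel's theorem gives nilpotency of $A_0$, non-singularity of $L_a$ on $A_1$ for self-normalization, the computation $0 = L_c^n(c) = L_d^n(e)$ for uniqueness, and the normalizer condition of nilpotent algebras to promote $C \subseteq A_0$ to $C = A_0$. Moreover, you have correctly put your finger on the one delicate point: the paper itself simply asserts that ``$L_c$ is non-singular on $A_1$'' for an arbitrary $c \in C$, which is false when $c \in C \cap A^2$ (there $L_c = 0$), so the worry you raise is a genuine imprecision in the published argument, not only in yours.

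However, the patch you propose does not close the gap. Proving that a Cartan subalgebra cannot lie inside $A^2$ (your self-normalization sketch for this is fine: if $C \subseteq A^2$ then $A^2 \subseteq N_A(C)$, forcing $C = A^2$, while $a \in N_A(A^2) \setminus A^2$) only produces some $c_0 \in C \setminus A^2$; it does not ``rule out'' elements of $C \cap A^2$, and indeed such elements cannot be ruled out: since $A^2$ has codimension $1$ in $A$, $C \cap A^2$ is nonzero whenever $\dim C \geq 2$. What must be shown is that those elements also lie in $A_0$, and your scalar-nonzero computation says nothing about them. The missing step is short but has to be said: fix $c_0 \in C \setminus A^2$; by your computation $c_0 \in A_0$; for any $c \in C \cap A^2$, the element $c + c_0$ lies in $C$ but not in $A^2$, hence $c + c_0 \in A_0$, and therefore $c = (c + c_0) - c_0 \in A_0$. (Equivalently, run the computation once with the fixed operator $L_{c_0} = \lambda L_a$, $\lambda \neq 0$, applied to an arbitrary $c = d + e \in C$: nilpotency of $C$ gives $L_{c_0}^N(c) = 0$ for large $N$, the $A_0$-component dies because $L_a$ is nilpotent on $A_0$, and non-singularity of $L_a$ on $A_1$ forces $e = 0$.) With either version inserted, your proof is complete and is essentially the paper's argument carried out more carefully than the paper itself does.
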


Using these same ideas, it can be shown that:

\begin{corollary}
The minimal ideals of $A$ are precisely $I_j=\{b \in A : p_j(L_a)(b)=0\}$ for $j >1$ and, if $n_1>1$, $I_1=\{b \in A : p_1(L_a)(b)=0\}.$ 
\end{corollary}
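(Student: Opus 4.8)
The plan is to reduce everything to the submodule structure of $A$ under $L_a$, exactly as in the two preceding theorems. The basic observation, already used above, is that $L_b = 0$ for every $b \in A^2$ and $\dim(A/A^2) = 1$, so every left multiplication $L_c$ is a scalar multiple of $L_a$; hence a subspace is a left ideal precisely when it is $L_a$-invariant. I would first pin down the two-sided ideals. Writing $c = \lambda a + b$ with $b \in A^2$, one computes $c\,a^j = \lambda\,(a\,a^j)$ for all $j$, since $a^k a^j = L_{a^k}(a^j) = 0$ for $k \ge 2$; as $j$ ranges over $1,\dots,n-1$ the elements $a\,a^j = a^{j+1}$ exhaust a spanning set of $A^2$, so $cA = A^2$ when $\lambda \ne 0$ and $cA = 0$ when $\lambda = 0$. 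It follows that any ideal $J$ containing an element outside $A^2$ contains $A^2$, hence $a$, hence $J = A$. Therefore every proper ideal is contained in $A^2$, and for subspaces of $A^2$ the right-ideal condition is automatic; so the proper ideals of $A$ are exactly the $L_a$-invariant subspaces of $A^2$.

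Next I would identify the minimal ideals with the irreducible $L_a$-submodules of $A^2$. Since $A^2$ is a proper nonzero ideal (for $n \ge 2$), $A$ is not simple, so any minimal ideal is proper and thus lies in $A^2$ by the previous step. For a subspace of $A^2$ the notions ``ideal'' and ``$L_a$-invariant subspace'' coincide, so a nonzero $J \subseteq A^2$ is a minimal ideal if and only if it is a minimal nonzero $L_a$-invariant subspace, i.e. an irreducible $L_a$-submodule of $A^2$.

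It then remains to enumerate the irreducible $L_a$-submodules of $A$ and test which lie in $A^2$. Because $L_a$ is the companion matrix of $p(x)$, $A$ is a cyclic $\mathbb{F}[x]$-module, so within each primary component $W_j$ the space $U_{j,1} = \{b : p_j(L_a)(b)=0\}$ is the unique irreducible submodule: any irreducible $V \subseteq W_j$ has minimal polynomial $p_j$, whence $V \subseteq U_{j,1}$, and $\dim V = \deg(p_j) = \dim U_{j,1}$ forces $V = U_{j,1}$. Thus the irreducible submodules of $A$ are precisely the $I_j = U_{j,1}$, $j = 1,\dots,s$. Using the decomposition $A^2 = U_{1,n_1-1} \oplus W_2 \oplus \cdots \oplus W_s$ recorded in the Frattini theorem, we get $I_j \subseteq A^2$ automatically for $j \ge 2$, while $I_1 = U_{1,1} \subseteq U_{1,n_1-1}$ holds exactly when $n_1 > 1$; if $n_1 = 1$ then $I_1 = W_1 \not\subseteq A^2$ (indeed $W_1$ is not a right ideal). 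Combining with the two previous paragraphs gives precisely the stated list.

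The only genuinely new ingredient beyond the earlier arguments is the right-multiplication computation that forces proper ideals into $A^2$, and this is where I expect the main care to be required: unlike the left multiplications, the operators $R_{a^j}$ are not all proportional to a single map, so one must compute their action on the basis directly to see that they annihilate $A^2$ and hit a spanning set of $A^2$ on $a$. Once the ideals are known to be the $L_a$-invariant subspaces of $A^2$, the rest is the standard fact that a cyclic $\mathbb{F}[x]$-module has simple socle in each primary component, which the filtration $0 \subset U_{j,1} \subset \cdots \subset U_{j,n_j} = W_j$ together with the dimension count $\dim U_{j,i} = i\deg(p_j)$ already makes explicit.
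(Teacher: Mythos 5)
Your proof is correct and follows essentially the route the paper intends: the paper gives no explicit proof (``Using these same ideas, it can be shown that\ldots''), and your argument is a faithful elaboration of exactly those ideas---the primary decomposition of $A$ under $L_a$, the fact that $L_b=0$ for $b\in A^2$, the dimension count $\dim U_{j,i}=i\deg p_j(x)$, and the failure of $W_1$ to be a right ideal when $n_1=1$. Your organizing observation that every proper ideal of $A$ lies in $A^2$, so that the proper ideals are precisely the $L_a$-invariant subspaces of $A^2$, is the correct way to obtain the ``precisely'' (completeness) direction and is consistent with the paper's statement that each $U_{j,i}$ except $U_{1,n_1}=W_1$ is an ideal.
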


\begin{corollary}
$Asoc(A)=\{b \in A : u(L_a)(b)=0\}$, where $u(x)=p_2(x) \ldots p_s(x)$ if $n_1=1$ and $u(x)=p_1(x) \ldots p_s(x)$ otherwise.
\end{corollary}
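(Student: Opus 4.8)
The plan is to read off the minimal ideals from the preceding corollary and then apply the standard primary-decomposition fact that the kernels of pairwise coprime operator polynomials form a direct sum equal to the kernel of their product. Recall that the socle $Asoc(A)$ is the sum of all minimal ideals of $A$. By the previous corollary these minimal ideals are exactly $I_j = \{b \in A : p_j(L_a)(b) = 0\}$ for $j = 2, \ldots, s$, together with $I_1 = \{b \in A : p_1(L_a)(b) = 0\}$ in the case $n_1 > 1$. Thus the first step is simply to record that $Asoc(A) = \sum_{j \in J} I_j$, where $J = \{2, \ldots, s\}$ when $n_1 = 1$ and $J = \{1, \ldots, s\}$ when $n_1 > 1$.

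For the second step I would invoke the fact that for any operator $T$ and pairwise coprime polynomials $f_1, \ldots, f_r$ one has $\ker(f_1(T) \cdots f_r(T)) = \ker(f_1(T)) \oplus \cdots \oplus \ker(f_r(T))$. The inclusion $\supseteq$ is immediate, and the reverse inclusion together with the directness of the sum follows from a B\'ezout identity of the form $\sum_i a_i \prod_{k \neq i} f_k = 1$. Since the $p_j$ are the distinct irreducible factors of $p(x)$ they are pairwise coprime, so applying this with $T = L_a$ and $\{f_i\} = \{p_j : j \in J\}$ gives $\ker(u(L_a)) = \bigoplus_{j \in J} \ker(p_j(L_a)) = \bigoplus_{j \in J} I_j = Asoc(A)$, exactly as claimed.

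The only point demanding attention is the bookkeeping between the two cases: one must check that the factor $p_1(x) = x$ appears in $u(x)$ precisely when $I_1$ is a minimal ideal. This is transparent, since $u$ is defined to include $p_1$ exactly when $n_1 > 1$, which by the previous corollary is exactly the condition under which $I_1$ joins the list of minimal ideals. I therefore expect no genuine obstacle; the statement is, as the text indicates, an immediate consequence of the description of the minimal ideals combined with the direct-sum property of coprime kernels.
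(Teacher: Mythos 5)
Your proposal is correct and is essentially the argument the paper intends: the paper gives no explicit proof of this corollary, saying only that it follows ``using these same ideas,'' meaning the primary decomposition of $A$ under $L_a$ together with the description of the minimal ideals in the preceding corollary. Your two steps --- sum the minimal ideals listed there, then identify that sum with $\ker u(L_a)$ via the standard coprime-kernel (B\'ezout) decomposition --- are exactly that, and the case bookkeeping on $n_1$ is handled correctly.

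One small point to patch before the argument is airtight: $Asoc(A)$ conventionally denotes the \emph{abelian} socle, i.e., the sum of the minimal \emph{abelian} ideals of $A$ (this is the usage in the Frattini-theory reference the paper cites), not the sum of all minimal ideals. So the fact you ``recall'' at the outset needs one extra observation, namely that every minimal ideal of the cyclic algebra $A$ is abelian. This is immediate from the structure established in the proof of the Frattini theorem: for $j \geq 2$ one has $I_j \subset W_j \subset A^2$, and when $n_1 > 1$ one has $I_1 = U_{1,1} \subset A^2$, while $A^2$ is abelian; hence every minimal ideal is abelian and the abelian socle coincides with the sum of all minimal ideals. With that one line added, your proof is complete.
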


\begin{corollary}
The unique maximal ideal of $A$ is $M_1 = \{b \in A : t(L_a)(b) =0\}$, where $t(x)=p(x)/p_1(x)$.
\end{corollary}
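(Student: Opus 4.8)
The plan is to identify the candidate ideal concretely, and then to obtain both maximality and uniqueness from a single structural fact about left multiplication in a cyclic Leibniz algebra.

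First I would observe that $M_1 = \{b \in A : t(L_a)(b) = 0\}$ with $t(x) = p(x)/p_1(x) = p(x)/x$ is precisely $A^2$. This is already implicit in the proof of the first theorem of this section, where the maximal subalgebra $A^2$ was identified as the null space of $g(L_a)$ with $g(x) = p(x)/x$; since $p_1(x) = x$, we have $g = t$. To verify the identification independently, I would check that $t$ annihilates each summand of $A^2 = U_{1,n_1-1} \oplus W_2 \oplus \cdots \oplus W_s$ (as $x^{n_1-1} \mid t$ kills $U_{1,n_1-1}$ and $p_j^{n_j} \mid t$ kills $W_j$ for $j \geq 2$), while $\deg t = n-1 = \dim A^2$ and the minimal and characteristic polynomials of $L_a$ coincide, so the null space of $t(L_a)$ has dimension exactly $n-1$. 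Hence $M_1 = A^2$, which is a two-sided ideal and is proper since $\dim(A/A^2) = 1$; being of codimension one, it is in fact a maximal subalgebra.

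The heart of the argument is to show that every proper ideal $I$ of $A$ is contained in $A^2$; granting this, $A^2 = M_1$ is automatically the unique maximal ideal. The key input is that $L_c = 0$ for every $c \in A^2$: the Leibniz identity yields $L_{xy} = [L_x,L_y]$, so $L_{a^2} = [L_a,L_a] = 0$ and inductively $L_{a^k} = 0$ for $k \geq 2$. Now suppose $I$ is an ideal with $I \not\subseteq A^2$. Since $A^2$ is maximal and $I + A^2$ is a subalgebra properly containing $A^2$, we get $I + A^2 = A$, so I may write $a = m + c$ with $m \in I$ and $c \in A^2$. Because $L_c = 0$, we have $L_a = L_m$, and therefore $A^2 = aA = L_a(A) = L_m(A) = mA \subseteq IA \subseteq I$, using that $I$ is a two-sided ideal. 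Then $A = I + A^2 \subseteq I$, contradicting the properness of $I$. Hence every proper ideal lies in $A^2$, and since $A^2$ is itself a proper ideal, it is the unique maximal ideal.

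I expect the main obstacle to be the step $L_a = L_m$, on which everything hinges: the argument collapses to the single fact that left multiplication by elements of $A^2$ vanishes, which lets the coset representative $m \in I$ of the generator act exactly as $a$ does and thereby drag all of $aA = A^2$ into $I$. The dimension bookkeeping of the first paragraph is routine once the primary decomposition of $A^2$ and the non-derogatory nature of $L_a$ are in hand.
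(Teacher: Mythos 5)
Your proposal is correct and uses essentially the approach the paper intends: the corollary is stated with ``using these same ideas,'' and your argument is exactly those ideas --- the identification $M_1 = U_{1,n_1-1}\oplus W_2 \oplus \cdots \oplus W_s = A^2$ via the primary decomposition and the non-derogatory dimension count, followed by the paper's own device of writing $a = m + c$ with $c \in A^2$, $L_c = 0$, so that $L_a = L_m$ drags $A^2 = aA = mA$ into any ideal not contained in $A^2$. This cleanly yields that every proper ideal lies in $A^2$, hence uniqueness of the maximal ideal.
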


\section{Non-embedding}

Let $A$ be a Leibniz algebra.  Define the upper central series as usual; that is, let $Z_{1}(A) = \{z \in A : zA = Az = 0\}$ and inductively, $Z_{j+1}(A) = \{z \in A : Az \text{ and } zA \subset Z_{j}(A) \}$.  If $A$ is an ideal in a Leibniz algebra $N$, then the terms in the upper central series of $A$ are ideals in $N$.  Suppose that $A$ is nilpotent of dimension greater than one and $\dim(Z_{1}(A))$ is 1.  We will show that $A$ cannot be any $N^{i},$ $i \geq 2$, for any nilpotent Leibniz algebra $N$.  This is an extension of the Lie algebra result in \cite{chao}.  Suppose to the contrary that $A = N^{i}$, where $N$ is nilpotent of class $t$, and let $z$ be a basis for $Z_{1}(A)$.  
For $n \in N$, $nz = \alpha_{nz}z$ and $zn = \alpha_{zn}z$.  If one of these coefficients is not 0, then $N$ is not nilpotent.  Hence, $Z_{1}(A) \subset Z_{1}(N)$.  Since $N^{t}$ is an ideal in $A$, $N^{t} \subset Z_{1}(A)$.  Then, since $\dim(Z_{1}(A)) = 1,$ $N^{t} = Z_{1}(A)$.  Our initial assumptions guarantee that $N^{t-1} \subset A$.  Hence there exists a $y \in N^{t-1} \subset A$, $y \notin Z_{1}(A)$, such that $yu = \alpha_{yu}z$ and $uy = \alpha_{uy}z$ for all $u \in N$.  Let $w$ also be in $N$.  Then $y(uw) = (yu)w + u(yw) = \alpha_{yu}zw + u\alpha_{yw}z = 0$.  
Similarly $(uw)y = 0$.  Since $A \subset N^{i}$, it follows that $y$ is in the center of $A$.  Since $y$ and $z$ are linearly independent, this is a contradiction.  Hence, we have the following theorem.

\begin{theorem}
Let $A$ be a nilpotent nonabelian Leibniz algebra with one-dimensional center.  Then $A$ cannot be any $N^{i},$ $i \geq 2,$ for any nilpotent Leibniz algebra $N$.
\end{theorem}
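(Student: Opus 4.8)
The plan is to argue by contradiction. Suppose $A = N^i$ for some nilpotent Leibniz algebra $N$ of class $t$ and some $i \geq 2$, and derive that the center $Z_1(A)$ must be at least two-dimensional, contradicting the hypothesis. Throughout, fix a nonzero $z$ spanning $Z_1(A)$.

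First I would locate $Z_1(A)$ inside $Z_1(N)$. Since $A$ is an ideal of $N$, the terms of the upper central series of $A$ are ideals of $N$; in particular $Z_1(A) = \langle z\rangle$ is an ideal of $N$, so for each $n \in N$ we have $nz, zn \in \langle z\rangle$, say $nz = \alpha z$ and $zn = \beta z$. If either scalar were nonzero, then $z$ would be an eigenvector with nonzero eigenvalue for a left or right multiplication of $N$; but in a nilpotent algebra every such multiplication acts nilpotently, which forbids nonzero eigenvalues. Hence $nz = zn = 0$ for all $n$, that is, $z \in Z_1(N)$, so $Z_1(A) \subseteq Z_1(N)$.

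Next I would pin down $N^t$ and produce a candidate second central element. Because $A = N^i \neq 0$ we must have $i \leq t$, so $N^t \subseteq N^i = A$, and $N^t \subseteq Z_1(N)$ by the class-$t$ property recorded in the preliminaries. Combined with $Z_1(A) \subseteq Z_1(N)$ and $\dim Z_1(A) = 1$, the chain $0 \neq N^t \subseteq Z_1(A)$ forces $N^t = Z_1(A) = \langle z\rangle$. The hypothesis $\dim A > 1$ rules out $i = t$ (otherwise $A = N^t$ would be one-dimensional), so in fact $i \leq t-1$ and therefore $N^{t-1} \subseteq N^i = A$. Moreover $N^{t-1} \neq N^t$, since $N^{t-1} = N^t$ would give $N^t = N N^{t-1} = N N^t = N^{t+1} = 0$; so I can choose $y \in N^{t-1}$ with $y \notin \langle z\rangle = Z_1(A)$.

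The heart of the argument, and the step I expect to be the \emph{main obstacle}, is to show that this $y$ is central in $A$, which then contradicts $y \notin Z_1(A)$ together with the one-dimensionality of the center. The two key facts are that for any $u, w \in N$ the products $yu$ and $uy$ lie in $N^t = \langle z\rangle$ (they are length-$t$ products), and that $z$ annihilates and is annihilated by all of $N$. Expanding $y(uw) = (yu)w + u(yw)$ via the Leibniz identity, both terms vanish because they are scalar multiples of $zw$ and $uz$ respectively; a symmetric manipulation, rewriting $(uw)y = u(wy) - w(uy)$, gives $(uw)y = 0$ for the same reason. Since $A = N^i \subseteq N^2$ is spanned by length-two products $uw$, these computations yield $ya = ay = 0$ for every $a \in A$, so $y \in Z_1(A)$, the desired contradiction. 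The delicate points are the index bookkeeping that keeps $y$, $yu$, and $uy$ in the correct terms of the lower central series, and arranging the Leibniz expansions so that each resulting term carries a factor killed by the centrality of $z$ in $N$.
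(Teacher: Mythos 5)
Your proof is correct and takes essentially the same route as the paper's: embed $Z_1(A)$ in $Z_1(N)$ via nilpotency of the multiplication operators, identify $N^t = Z_1(A)$, pick $y \in N^{t-1} \setminus Z_1(A)$, and use the Leibniz identity (in both orders) to show $y$ centralizes $A = N^i \subseteq N^2$, contradicting one-dimensionality of the center. You in fact supply details the paper leaves implicit, notably the bookkeeping showing $i \leq t-1$ and $N^{t-1} \neq N^t$, which guarantee the element $y$ exists.
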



Let $A$ be a Leibniz algebra.  define $R_{1}(A) = \{r \in A : Ar = 0\}$ and inductively, $R_{j+1}(A) = \{r \in A : Ar \subset R_{j}(A)\}$.  The $R_{j}(A)$ are left ideals of $A$.  
Let $B$ be an ideal in $A$ and let $L$ be the homomorphism from $A$ into the Lie algebra of derivations of $B$ given by $L(a) = L_{a}$, left multiplication of $B$ by $a$.  
Let $E(B,A)$ be the image of $L$.  $E(B,A)$ is a Lie algebra, and $E(B,B)$ is an ideal in $E(B,A)$.  For any left ideal, $C$, of $A$ that is contained in $B$, let $E(B,A,C) = \{E \in E(B,A) : E(C) = 0 \}$.  $E(B,A,C)$ is an ideal in $E(B,A)$, and $E(B,A)/E(B,A,C)$ is isomorphic to $E(C,A)$.

\begin{theorem}
Let $B$ be a nilpotent Leibniz algebra with $\dim(R_{1}(B)) = 1$ and $\dim(B) \geq 2$.  Then $B$ is not an ideal of any Leibniz algebra $A$ in which $B \subset \Phi(A)$.
\end{theorem}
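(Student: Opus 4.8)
The plan is to argue by contradiction: suppose $B$ is an ideal of a Leibniz algebra $A$ with $B \subseteq \Phi(A)$, and write $\langle z \rangle = R_1(B)$. The first thing I would record is that $R_1(B)$ is not merely a left ideal of $B$ but a left ideal of $A$: for $a \in A$ and $r \in R_1(B)$ the Leibniz identity gives $b(ar) = (ba)r + a(br) = 0$ for every $b \in B$, since $ba \in B$ annihilates $r$ on the right and $br = 0$. Hence each $L_a$ preserves $\langle z \rangle$, say $az = \lambda(a)\,z$ for a linear functional $\lambda \colon A \to k$.

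The crux, and the step I expect to be the main obstacle, is to show that $A$ acts nilpotently on $B$, i.e.\ that $L_a|_B$ is nilpotent for every $a \in A$; equivalently that the Lie algebra $E(B,A) = L(A)$ consists of nilpotent operators. This is the Leibniz analogue of the classical fact that an ideal sitting inside the Frattini subalgebra is acted on nilpotently, and I would prove it by fixing $a$, forming the Fitting decomposition $B = B_0 \oplus B_1$ of the derivation $L_a|_B$, and using the non-generator property of $\Phi(A)$ to force $B_1 = 0$: since $L_a$ is invertible on $B_1$ one has $B_1 = L_a(B_1) \subseteq A^2$, and a subalgebra assembled from $a$ and $B_0$ together with $B_1$ would generate $A$ while $B_1 \subseteq B \subseteq \Phi(A)$ consists of non-generators, a contradiction. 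This nilpotency is exactly where the hypothesis $B \subseteq \Phi(A)$ does its work, and it can also be reached through the reduction $E(B,B) \subseteq \Phi(E(B,A))$ afforded by the $E(B,A,C)$ machinery. Granting it, the eigenvalue relation $az = \lambda(a)z$ with $z \neq 0$ forces $\lambda \equiv 0$, so $Az = 0$; thus $z \in R_1(A)$ and $R_1(B) = R_1(A) \cap B$.

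With the action nilpotent I would then climb the ascending series $R_j(A) \cap B$, which are $L_a$-invariant because the $R_j(A)$ are left ideals of $A$. Since $E(B,A)$ is a Lie algebra of nilpotent operators, Engel's theorem produces a nonzero common null vector in every nonzero quotient module $B/(R_j(A)\cap B)$, so the series strictly increases until it fills $B$. As $R_1(A)\cap B = \langle z\rangle \neq B$ (here $\dim B \geq 2$ enters), there exists $y \in (R_2(A)\cap B)\setminus\langle z\rangle$ with $Ay \subseteq \langle z\rangle$. Writing $ay = \mu(a)\,z$ defines a nonzero functional $\mu \colon A \to k$, and $By \neq 0$ (otherwise $y \in R_1(B) = \langle z\rangle$), so $\mu$ does not vanish on $B$.

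Finally I would manufacture the contradiction by exhibiting a maximal subalgebra that omits $B$. Using $Az = 0$, the Leibniz identity yields $(uw)y = u(wy) - w(uy) = \mu(w)(uz) - \mu(u)(wz) = 0$ for all $u,w \in A$, so $\mu$ annihilates $A^2$. Then $M = \ker\mu$ is a codimension-one subspace containing $A^2$, hence a subalgebra, and therefore a maximal subalgebra of $A$. Since $\Phi(A)$ lies in every maximal subalgebra we would obtain $B \subseteq \Phi(A) \subseteq M = \ker\mu$, contradicting $\mu|_B \neq 0$. This contradiction finishes the argument; apart from the nilpotency of the action, every step is a routine computation with the Leibniz identity.
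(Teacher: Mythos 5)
Your overall architecture --- first show that $A$ acts nilpotently on $B$, then climb the series $R_j(A)\cap B$ via Engel to produce $y$ with $Ay\subseteq\langle z\rangle$, and finally contradict $B\subseteq\Phi(A)$ with the maximal subalgebra $\ker\mu\supseteq A^2$ --- is internally consistent: granting the nilpotency claim, every later step checks out. But that claim, which you yourself flag as the crux, is a genuine gap, and it is not the ``Leibniz analogue of a classical fact'': the statement is false even for Lie algebras. Take $A=\langle x,v_1,v_2\rangle$ with $[x,v_1]=v_1$, $[x,v_2]=v_1+v_2$, $[v_1,v_2]=0$. The maximal subalgebras of $A$ are exactly $\langle v_1,v_2\rangle$ and the subalgebras $\langle x+v,v_1\rangle$ with $v\in\langle v_1,v_2\rangle$, so $\Phi(A)=\langle v_1\rangle$; thus $B=\langle v_1\rangle$ is an ideal contained in $\Phi(A)$ on which $L_x$ acts invertibly. (This $B$ is one-dimensional, so it does not contradict the theorem itself, but it kills the lemma you rely on --- and your proposed proof of that lemma nowhere uses $\dim R_1(B)=1$ or $\dim B\geq 2$, so it cannot be correct.) Concretely, your Fitting argument breaks at ``a subalgebra assembled from $a$ and $B_0$ together with $B_1$ would generate $A$'': those elements generate only $\langle a\rangle+B$, which is typically proper, so the non-generator property of $\Phi(A)$ yields nothing. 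The alternative route fails as well: $E(B,B)\subseteq\Phi(E(B,A))$ constrains the Lie-algebra structure of $E(B,A)$, not the spectra of its elements (in the example above $E(B,A)$ is spanned by the identity operator and $E(B,B)=0$). Note also that without nilpotency you cannot even reach your first conclusion $Az=0$: the Leibniz identity only gives $\lambda(A^2)=0$, and then $\ker\lambda$ is a maximal subalgebra containing $A^2+B$, which is perfectly compatible with $B\subseteq\Phi(A)$.

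The paper's proof is designed precisely to avoid any such claim. It pushes $B\subseteq\Phi(A)$ through the epimorphism $L\colon A\to E(B,A)$ to get $E(B,B)\subseteq\Phi(E(B,A))$, restricts to the $A$-invariant subspace $R_2(B)$, identifies $E(R_2(B),B)$ exactly --- here $\dim R_1(B)=1$ enters: it is the $(k-1)$-dimensional space $S$ of maps sending each $z_i$ into $\langle z_k\rangle$ and killing $z_k$ --- and then exhibits an explicit complementary subalgebra $M$ of $S$ inside $E(R_2(B),A)$. A nonzero subalgebra contained in the Frattini subalgebra cannot admit such a complement, and $S\neq 0$ because $\dim B\geq 2$ forces $R_2(B)\supsetneq R_1(B)$. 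Observe that in the paper's argument the operators in $E(R_2(B),A)$ are allowed to act on $z_k$ with a nonzero eigenvalue; that is exactly the possibility your approach must rule out and cannot. To salvage your outline you would have to prove nilpotency of the action using the full hypotheses on $B$, and that is essentially the content of the theorem itself.
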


\begin{proof}
Suppose that $B$ is an ideal in $A$ that contradicts the theorem.  Then $E(B,B) = L(B) \subset L(\Phi(A)) \subset \Phi(L(A)) = \Phi(E(B,A))$.  Let $\{z_{1}, z_{2}, \hdots, z_{k}\}$ be a basis for $R_{2}(B)$ and $\{z_{k}\}$ be a basis for $R_{1}(B)$.  Let $\Pi$ be the restriction map from $E(B,A)$ to $E(R_{2}(B),A)$.  Since $\left(E(B,B) + E(B,A,R_{2}(B))\right)/E(B,A,R_{2}(B)) = E(B,B)/\left(E(B,A,R_{2}(B)) \cap E(B,B)\right) = E(B,B)/E(B,B,R_{2}(B)) = E(R_{2}(B),B)$, it follows that $E(R_{2}(B),B) = \Pi \left(E(B,B)\right) \subset \Pi \left(\Phi(E(B,A)) \right) \subset \Phi \left( E(R_{2}(B),A) \right)$.

Now we show that $E(R_{2}(B),B)$ is not contained in $\Phi(E(R_{2}(B), A))$ by showing that $E(R_{2}(B), B)$ is complemented by a subalgebra in $E(R_{2}(B),A)$.  For $i = 1, \hdots, k$, let $e_{i}(z_{j}) = \delta_{ij}z_{k}$ for $j = i, \hdots, k$, where $\delta_{ij}$ is the Kronecker delta.  Let $S = \spn \{ e_{1}, \hdots, e_{k-1}\}$.  We claim that $S = E(R_{2}(B), B)$.  Since $BR_{2}(B) \subset R_{1}(B)$, it follows that $E(R_{2}(B),B) \subset S$.  To show equality, we show $\dim(E(R_{2}(B),B)) = k-1 = \dim(S)$.  For $x \in B$, $L_{x}$ induces a linear functional on $R_{2}(B)$; that is, $E(R_{2}(B),B)$ is contained in the dual of $R_{2}(B)$.  Hence $\dim(E(R_{2}(B),B) = \dim(R_{2}(B)) - \dim(R_{2}(B)^{B})$ where $R_{2}(B)^{B} = \{z \in R_2(B) : L_{x}(z) = 0 \text{ for all } x \in B \} = R_{1}(B)$.  Hence $\dim(E(R_{2}(B),B)) = k-1 = \dim(S)$, and $S = E(R_{2}(B),B)$.

We now show that $S$ is complemented in $E(R_{2}(B),A)$.  Let $M = \{E \in E(R_{2}(B),A) : E(z_{i}) = \sum_{j=1}^{k-1} \lambda_{ij}z_{j},$ $\lambda_{ij} \in F,$ $i = 1, \hdots, k-1\}$.  Clearly, $M$ is a subspace of $E(R_{2}(B),A)$ and $M \cap S = 0$.  We claim that $M+S = E(R_{2}(B),A)$.  Let $E \in E(R_{2}(B),A)$.  Then $E(z_{i}) = \sum_{j=1}^{k-1} \lambda_{ij}z_{j} + \lambda_{ik}z_{k}$ for $i = 1, \hdots, k-1$ and $E(z_{k}) = \lambda_{k}z_{k}$.  Now $E = (E - \sum_{i=1}^{k-1} \lambda_{i,k}e_{i}) + (\sum_{i=1}^{k-1} \lambda_{i,k}e_{i}) \in M +S$.  Therefore, $E(R_{2}(B),A) = M + S$.  Suppose that $M = 0$.  Then $E(R_{2}(B),A) = E(R_{2}(B),B)$, which contradicts $E(R_{2}(B),B) \subset \Phi \left(E(R_{2}(B),A)\right)$.  Thus, $M \neq 0$.  Hence, $S$ is complemented in $E(R_{2}(B),A)$, which contradicts $S \subset \Phi \left(E(R_{2}(B),A)\right)$.  This contradiction establishes the result.
\end{proof}

\noindent{\bf Acknowledgements.}  This work was completed as part of a Research Experience for Graduate Students at North Carolina State University, supported by the National Science Foundation.  The authors would like to thank Professor E. L. Stitzinger for his guidance and support.

\end{document}